\newcommand{\p}{\operatorname{\mathbb{P}}}
\newcommand{\C}{\operatorname{\mathbb{C}}}
\newcommand{\Z}{\operatorname{\mathbb{Z}}}
\newcommand{\kk}{{\bf k}}
\newcommand{\id}{\operatorname{id}}
\newcommand{\car}{\operatorname{char}}
\newcommand{\Bir}{\operatorname{Bir}}
\newcommand{\Cr}{\operatorname{Cr}}
\newcommand{\Div}{\operatorname{Div}}
\setlist[enumerate]{label=\rm{(\arabic*)}}
\setlist[enumerate,2]{label=\rm({\it\roman*})}
\setlist[itemize]{label=\raisebox{0.25ex}{\tiny$\bullet$}}
\theoremstyle{plain}
\newtheorem{theorem}{Theorem}[section]
\newtheorem{lemma}[theorem]{Lemma}
\newtheorem{corollary}[theorem]{Corollary}
\theoremstyle{definition}
\newtheorem{remark}{Remark}[section]
\author{Anthony Genevois} 
\author{Anne Lonjou}
 \author{Christian Urech}
\newcommand{\Address}{{
		\bigskip
		\small
		
		\textsc{Institut Montpellierain Alexander Grothendieck, 499-554 Rue du Truel, 34090 Montpellier, France.}\par\nopagebreak
		\textit{E-mail address}: \texttt{anthony.genevois@umontpellier.fr}
		\medskip
		
		\textsc{Department of Mathematics, University of the
			Basque Country UPV/EHU, Leioa, Spain. IKERBASQUE, Basque Foundation for Science, Bilbao, Spain.}\par\nopagebreak
		\textit{E-mail address}: \texttt{anne.lonjou@ehu.eus}
		\medskip
		
		\textsc{ETH Z\"urich, Department of Mathematics, R\"amistrasse 101, 8092 Z\"urich, Switzerland.}\par\nopagebreak
		\textit{E-mail address}: \texttt{christian.urech@math.ethz.ch}
		\medskip
		
}}
\title{On a Theorem by Lin and Shinder through the Lens of Median Geometry}
\begin{document}
\maketitle

\begin{abstract}
 Recently, Lin and Shinder constructed non-trivial homomorphisms from Cremona groups of rank $>2$ to $\mathbb{Z}$ using motivic techniques. In this short note we propose an alternative perspective from median geometry on their theorem. 
\end{abstract}

\section{Introduction}
\noindent
To an algebraic variety $X$ over a field $\kk$ we can associate its group of birational transformations $\Bir(X)$. The group $\Cr_n(\kk):=\Bir(\p^n_\kk)$ is called the \emph{Cremona group in $n$ variables}. The classical question, whether $\Cr_2(\kk)$ is a simple group has been answered in the negative in the last decades \cite{Cantat-Lamy}, \cite{Lonjou_non_simplicity}, \cite{shepherd2021some}, \cite{lamy2020signature}. In fact, the group is highly non-simple in the sense that all its simple subgroups are very small \cite{urech2020simple}. However, if $\kk$ is a perfect field, $\Cr_2(\kk)$ is generated by involutions \cite{lamy2021generating} and hence does not admit non-trivial homomorphisms to $\Z$. 

If $n\geq 3$ the situation is understood less well. In \cite{blanc2021quotients} it has been shown that $\Cr_n(\kk)$ is never simple if $\kk$ is a subfield of $\C$ and $n\geq 3$. In fact, the authors show that in this case, $\Cr_n(\kk)$ surjects to an infinite free product of cyclic groups of order two. In \cite{lin2022motivic} the authors construct surjective homomorphisms from $\Cr_n(\kk)$ to $\Z$ for $n\geq 3$ for many fields $\kk$ (see below); it still seems to  be an open question whether $\Cr_3(\C)$ surjects to $\Z$ though. The results in \cite{lin2022motivic} are proved in the context of Grothendieck rings of varieties using motivic techniques. In \cite{blanc2022birational} the result of Lin and Shinder is reproved using an enhanced version of the techniques developed in \cite{blanc2021quotients}. In fact, the authors show the much stronger result that $\Cr_n(\kk)$ surjects to a free product of infinitely many copies of $\Z$, if $n\geq 4$ and $\kk\subset \C$. Nevertheless, in \cite{lin2022motivic} the authors give a stronger constraint on the generators of $\Cr_n(\kk)$; namely they show that the groups $\Cr_n(\kk)$ are not generated by pseudo-regularisable transformations. Meanwhile, in \cite{blanc2021quotients} the authors prove only that the groups $\Cr_n(\kk)$ are not generated by elements of finite order. An element $f\in\Bir(X)$ is \emph{pseudo-regularisable} if there exists a variety $Y$ and a birational map $\psi\colon Y\dashrightarrow X$ such that the exceptional loci of $\psi^{-1}f\psi$ and its inverse have codimension strictly greater than 1.

\medskip \noindent 
In \cite{lin2022motivic}, the authors define a map $c\colon\Bir(X)\to \Z[\Bir_{n-1}/\kk]$, where $\Z[\Bir_{n-1}/\kk]$ is the free abelian group on the set of $(n − 1)$-dimensional birational equivalence classes of varieties over $\kk$, by $c(f)=[K_1]+\cdots +[K_m]-[H_1]-\cdots-[H_k]$ in the notations of Theorem~\ref{thm:quotient}. They then use motivic arguments in order to show that $c(f)$ is a homomorphism.

	\medskip \noindent 
Recently, there have been several articles about Cremona groups acting isometrically on median graphs \cite{lonjou-urech}, \cite{10.1093/imrn/rnad015}, \cite{lonjou2023finitely}\footnote{	Many papers, including \cite{lonjou-urech} rather use the notion of CAT(0) cube complexes instead of median graphs. Median graphs are exactly the 1-skeletons of CAT(0) cube complexes, so median graphs and CAT(0) cube complexes essentially define the same objects. However, it is for various reasons more natural to directly work with median graphs (see \cite{genevois2023cat} for references and a discussion). }. The goal of this note is to give an independent proof of the theorem of Lin and Shinder using the actions of $\Bir(X)$ on median graphs constructed by the second and third author in \cite{lonjou-urech}.

\medskip \noindent
We prove an a priori stronger version of their result. Recall that two subvarieties $A,B\subset X$ are \emph{Cremona equivalent} if there exists a birational transformation $f$ of $X$ such that $f(A)=B$, where $f(A)$ denotes the strict transform of $A$. Denote by $\Z[\Div(X)/_\approx]$ the free abelian group on the set of Cremona equivalence classes of $(n-1)$-dimensional subvarieties of $X$.   Our main result is the following:

\begin{theorem}\label{thm:quotient}
	Let $X$ be a normal variety and let $f\in\Bir(X)$. Let $H_1,\dots, H_k$ be the irreducible components of strict codimension 1 of the exceptional locus of $f$ and let $K_1,\dots, K_m$ be the irreducible components of strict codimension 1 of the exceptional locus of $f^{-1}$. The assignment  $f\mapsto [K_1]+\dots +[K_m] -[H_1]-\dots-[H_k]$ defines a group homomorphism $ \varphi\colon\Bir(X)\to \Z[\Div(X)/_\approx]$. 
	
	\medskip \noindent
	In particular, if there exists no bijection $\Phi\colon\{H_1,\dots, H_k\}\to\{K_1,\dots, K_m\}$
	such that $H_i$ is Cremona equivalent to $\Phi(H_i)$ for all $i$, then $f\notin\ker(\varphi)$.
	Moreover, $\ker(\varphi)$ contains all pseudo-regularisable transformations. In particular, $\Bir(X)$ is not generated by pseudo-regularisable transformations.
\end{theorem}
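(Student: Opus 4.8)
The plan is to build a concrete action of $\Bir(X)$ on a median graph $\mathcal{C}$ whose vertices encode the "models" birational to $X$ (à la \cite{lonjou-urech}), and to read off $\varphi$ from how elements move along the hyperplanes of $\mathcal{C}$. First I would recall the construction: vertices of $\mathcal{C}$ are equivalence classes of birational maps $Y \dashrightarrow X$ with $Y$ normal, two vertices being joined by an edge when the corresponding models differ by a single divisorial blow-up/blow-down; the median structure comes from the fact that the combinatorial data of which prime divisors have been "extracted" behaves like a set (symmetric-difference) operation. The group $\Bir(X)$ acts on $\mathcal{C}$ by precomposition. For each Cremona equivalence class $[D] \in \Div(X)/_{\approx}$ one gets a $\Bir(X)$-invariant family of hyperplanes of $\mathcal{C}$ (those separating a model in which a divisor of class $[D]$ is present from one in which it is contracted), and hence a "signed length" cocycle counting, with orientation, the hyperplanes of type $[D]$ crossed when one moves from a basepoint $o$ to $f \cdot o$. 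The value of this cocycle is exactly $\#\{i : [K_i] = [D]\} - \#\{j : [H_j] = [D]\}$, because the codimension-$1$ components of $\mathrm{Exc}(f)$ (resp. $\mathrm{Exc}(f^{-1})$) are precisely the prime divisors contracted by $f$ (resp. created by $f$), i.e. the hyperplanes one must cross going from $o$ to $f\cdot o$ in the "forward" (resp. "backward") direction. Summing over all classes $[D]$ assembles these into the map $f \mapsto \sum_i [K_i] - \sum_j [H_j]$.

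Next I would verify the cocycle identity. The key point is that for an isometric action of a group $G$ on a median graph without inversions, and a $G$-invariant collection $\mathcal{H}$ of hyperplanes partitioned into $G$-invariant sub-collections $\mathcal{H}_{[D]}$, the map sending $g$ to the vector $\big( \varepsilon_{[D]}(g) \big)_{[D]}$, where $\varepsilon_{[D]}(g)$ is the oriented number of hyperplanes of $\mathcal{H}_{[D]}$ separating $o$ from $g\cdot o$ counted with the sign given by a $G$-invariant choice of orientation, is a homomorphism to $\bigoplus_{[D]} \Z$. This is the standard fact that oriented hyperplane-counting along geodesics in a median graph is additive under concatenation, combined with $G$-invariance of the orientation; concretely $\varepsilon_{[D]}(gh) = \varepsilon_{[D]}(g) + \varepsilon_{[D]}(h)$ follows by writing a geodesic from $o$ to $gh\cdot o$ through $g\cdot o$ and using that $g$ maps the geodesic $[o, h\cdot o]$ to a geodesic $[g\cdot o, gh \cdot o]$ preserving the orientation of each hyperplane it crosses. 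One must check this orientation exists and is $\Bir(X)$-invariant: the natural orientation "towards the side where the divisor $D$ is \emph{not} present" is manifestly preserved because $\Bir(X)$ acts on models by composition, which does not affect whether a given abstract prime divisor over $X$ is extracted or not on a given side of the hyperplane.

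Having the homomorphism $\varphi\colon \Bir(X) \to \Z[\Div(X)/_{\approx}]$, the remaining assertions are immediate. If there is no bijection $\Phi$ matching the $H_i$ with the $K_j$ up to Cremona equivalence, then the multiset $\{[K_1],\dots,[K_m]\}$ differs from $\{[H_1],\dots,[H_k]\}$, so $\varphi(f) = \sum_i [K_i] - \sum_j [H_j] \neq 0$ in the free abelian group, i.e. $f \notin \ker\varphi$. For pseudo-regularisability: if $f$ is pseudo-regularisable via $\psi\colon Y \dashrightarrow X$, then $g := \psi^{-1} f \psi$ has $\mathrm{Exc}(g)$ and $\mathrm{Exc}(g^{-1})$ of codimension $>1$, so the analogous homomorphism $\varphi_Y$ for $\Bir(Y)$ kills $g$; since $\varphi$ and $\varphi_Y$ are intertwined by the isomorphism $\Bir(X) \cong \Bir(Y)$ induced by conjugation by $\psi$ (which also induces an isomorphism $\Z[\Div(X)/_{\approx}] \cong \Z[\Div(Y)/_{\approx}]$ on Cremona classes), we get $\varphi(f) = 0$. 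As $\ker\varphi$ is a subgroup and $\varphi$ is surjective onto a nontrivial group whenever some $K_i$ is not Cremona-equivalent to any blown-down divisor, $\Bir(X)$ cannot be generated by pseudo-regularisable elements.

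The main obstacle I expect is not the cocycle bookkeeping but pinning down the precise median model so that the hyperplanes genuinely biject with prime divisors over $X$ and the orientation is well defined and $\Bir(X)$-equivariant; in particular one has to be careful that a hyperplane of $\mathcal{C}$ "remembers" a single geometric valuation/prime divisor class rather than a blow-up of a specific model, and that crossing it back and forth cancels (no accidental parity issues). This is essentially a translation of the content of \cite{lonjou-urech} into the language needed here, together with the identification of codimension-$1$ exceptional components with contracted divisors; once that dictionary is set up, the proof is a one-line application of the additivity of oriented hyperplane counting.
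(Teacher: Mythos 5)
Your proposal is correct and follows essentially the same route as the paper: the action of $\Bir(X)$ on the median graph of marked models from \cite{lonjou-urech}, the identification of orbits of hyperplanes with Cremona equivalence classes of prime divisors, and an oriented hyperplane-crossing count along a path from the basepoint $o=[X,\id]$ to $f\cdot o$. The only presentational differences are that you define the signed-counting cocycle directly into the free abelian group, whereas the paper first builds a path-label homomorphism into a right-angled Artin group $A(\Gamma)$ and then notes that $A(\Gamma)$ is free abelian because all hyperplanes of $\mathcal{C}^0(X)$ are pairwise transverse, and that your conjugation argument for pseudo-regularisable elements is the paper's observation that such elements fix a vertex, combined with basepoint-independence of the homomorphism in the abelian target.
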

\noindent
Note that the condition in the second part of Theorem~\ref{thm:quotient} is in particular satisfied if there exists an $H_i$ that is not Cremona equivalent to any of the $K_j$.
The construction of our {group} homomorphism $\varphi$ relies on a general construction of homomorphisms from groups acting without inversion on median graphs to right-angled Artin groups.

\medskip \noindent
The existence of birational transformations whose exceptional loci satisfy the condition from the second part of Theorem~\ref{thm:quotient} follows for instance from an example by Hassett and Lai \cite{hassett2018cremona} for the case $\p^n_\kk$ if $n\geq 4$. In \cite{lin2022motivic}, the work of Hassett and Lai and other examples are extended in order to construct birational maps $f$ such that $c(f)\neq 0$. Note that $c(f)\neq 0$ implies in particular that the condition on the exceptional locus of $f$ from Theorem~\ref{thm:quotient} is satisfied.  The following theorem summarizes what is known so far:

\begin{theorem}[{\cite[Theorem~4.4]{lin2022motivic}}]\label{thm:linshinder}
	There exist birational transformations $f\in\Bir(\p^n_{\kk})$ such that $c(f)\neq 0$ in the following cases:
	\begin{enumerate}
		\item $n\geq 5$ and $\kk$ infinite,
		\item $n\geq 4$ and $\car(\kk)=0$,
		\item $n=3$ and $\kk$ a number field, a function field over a number field, or a function field over an algebraically closed field.
	\end{enumerate} 
\end{theorem}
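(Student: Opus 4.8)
Since $c$ is defined by $c(f)=[K_1]+\cdots+[K_m]-[H_1]-\cdots-[H_k]\in\Z[\Bir_{n-1}/\kk]$, the statement is a pure existence assertion, and the problem reduces, in each of the three cases, to producing a single $f\in\Bir(\p^n_\kk)$ whose codimension $1$ exceptional divisors fail to match up to birational equivalence on the two sides. The plan is to arrange the cleanest such discrepancy: make every component $H_i$ of $\mathrm{Exc}(f)$ rational (birational to $\p^{n-1}$) while one component $K_{j_0}$ of $\mathrm{Exc}(f^{-1})$ is birational to a variety $W$ of dimension $n-1$ that is \emph{not} rational. Grouping $c(f)$ by birational type, the coefficient of $[W]$ is then the number of $K_j$ birational to $W$, which is at least one, and no $H_i$ can cancel it; hence $c(f)\neq0$. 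Thus it suffices, per case, to realize a chosen non-rational $W$ as an exceptional divisor and to certify its non-rationality by a suitable birational invariant.

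The geometric heart of the proof is the construction of honest Cremona self-maps of $\p^n$ realizing a prescribed non-rational exceptional divisor. I would obtain $f$ as $f=\pi_2\circ\pi_1^{-1}$ from a smooth projective rational variety $V$ equipped with two distinct birational morphisms $\pi_1,\pi_2\colon V\to\p^n$; the codimension $1$ components of $\mathrm{Exc}(f)$ (resp. $\mathrm{Exc}(f^{-1})$) are then read off from the $\pi_1$-exceptional (resp. $\pi_2$-exceptional) divisors. In the simplest incarnation $V=\mathrm{Bl}_Z\p^n$ is a single smooth blow-up along a smooth codimension $2$ center $Z$, so that the exceptional divisor is a $\p^1$-bundle over $Z$, hence birational to $Z\times\p^1$; choosing $Z$ non-rational forces this divisor to be non-rational as well, and the whole difficulty is to produce the \emph{second} contraction $\pi_2$ landing again on $\p^n$. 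This is exactly what the special Cremona transformation of $\p^4$ built by Hassett and Lai \cite{hassett2018cremona} provides, and what the degenerations and variants assembled in \cite[\S4]{lin2022motivic} extend to the remaining ranges; the output is a transformation whose two boundary exceptional divisors are genuinely birationally distinct.

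With the construction in hand I would treat the three cases by choosing $W$ and its non-rationality certificate appropriately. In characteristic $0$ with $n=4$, Hassett and Lai directly provide $f\in\Bir(\p^4_\kk)$ whose exceptional divisor is birational to a non-rational threefold, the non-rationality being visible through a Hodge-theoretic birational invariant (some $h^{p,0}\neq0$); this is the base case of $(2)$. To reach $\p^n$ for $n>4$ in characteristic $0$ I would suspend: fixing a birational identification $\p^n\dashrightarrow\p^4\times\p^{n-4}$ and transporting $f$ to $f\times\mathrm{id}_{\p^{n-4}}$, whose codimension $1$ exceptional components are the previous ones multiplied by $\p^{n-4}$. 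Because $c$ depends only on the birational types of the exceptional divisors, data intrinsic to the common function field, it matches under the induced isomorphism $\Bir(\p^4\times\p^{n-4})\cong\Bir(\p^n)$, and $W\times\p^{n-4}$ stays non-rational by the same $h^{p,0}$ (Künneth); this gives $(2)$ in full. For $(1)$, where $\kk$ is only assumed infinite and the characteristic may be positive, Hodge theory is unavailable, so I would certify non-rationality of $W$ by a characteristic-free obstruction, for instance the class in $K_0(\mathrm{Var}_\kk)/(\mathbb{L})$ or a decomposition-of-the-diagonal argument, noting that non-rational $W$ of dimension $n-1\geq4$ exist over every infinite field in all characteristics, which accounts for the bound $n\geq5$. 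For $(3)$ with $n=3$ the exceptional divisors are surfaces; I would build $f\in\Bir(\p^3_\kk)$ contracting a geometrically rational but $\kk$-irrational surface (a conic bundle, or a del Pezzo surface of low degree), whose $\kk$-irrationality is certified by an arithmetic birational invariant such as $H^1(\mathrm{Gal},\Pic)$, the Brauer group, or an unramified cohomology class, precisely the invariants whose non-vanishing the hypotheses on $\kk$ are designed to guarantee.

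The main obstacle, in every case, is the geometric realization step rather than the non-rationality certificates, which are essentially off the shelf once the varieties are chosen: one must produce a genuine birational self-map of projective space, not merely of some rational variety, whose codimension $1$ exceptional locus is controlled on both sides and exhibits no accidental cancellation of birational types, and one must do so in the correct dimension and characteristic ranges. This is where the explicit constructions of \cite{hassett2018cremona} and their extensions in \cite{lin2022motivic} carry the weight of the argument, and it is also the reason the three cases come with genuinely different hypotheses on $n$ and $\kk$.
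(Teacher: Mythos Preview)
The paper does not prove this statement at all: Theorem~\ref{thm:linshinder} is quoted verbatim from \cite[Theorem~4.4]{lin2022motivic} and used as a black box to feed into Theorem~\ref{thm:quotient} and obtain Corollary~\ref{cor:shinderlin}. The only commentary the paper offers is the remark that the hypothesis $\kk\subset\C$ in case~(2) is unnecessary by a spreading-out argument. So there is nothing in the paper to compare your attempt against.

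That said, your outline is in the right spirit and tracks the actual strategy of \cite{lin2022motivic} reasonably well: produce an $f$ whose codimension~$1$ exceptional divisors on one side are all rational while the other side carries a non-rational component, and certify the non-rationality case by case. A couple of points where your sketch is loose or potentially off. First, in case~(2) for $n=4$ you assert the non-rational threefold is detected by some $h^{p,0}\neq 0$; be careful here, since many of the standard non-rational threefolds (cubic threefolds, quartic double solids) have all $h^{p,0}=0$ and are instead detected by the intermediate Jacobian. The Hassett--Lai example does in fact go through a K3 surface, so the exceptional divisor is birational to $K3\times\P^1$ and your $h^{2,0}$ argument survives, but you should say this explicitly rather than leave it as a generic Hodge-theoretic claim. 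Second, your treatments of cases~(1) and~(3) are really just pointers to the kinds of invariants one might use (Grothendieck ring modulo $\mathbb{L}$, unramified cohomology, Brauer obstructions) without exhibiting the actual Cremona self-map of $\P^n$; as you yourself acknowledge at the end, the hard part is the explicit construction, and your proposal defers entirely to \cite{lin2022motivic} for it. So what you have written is an accurate roadmap, not a proof.
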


\begin{remark}
	In \cite{lin2022motivic} the authors ask for Case (2) in addition that $\kk\subset\C$. However, this condition is not needed. Indeed, if two varieties $X$ and $Y$ are not birationally equivalent over the algebraic closure of their field of definition, they are not birationally equivalent over any field, as can be seen by a spread out argument. 
\end{remark}

\noindent
With Theorem~\ref{thm:linshinder}, our Theorem~\ref{thm:quotient} has the following direct corollary (which in particular does not rely on the motivic fact that $c$ is a homomorphism):

\begin{corollary}\label{cor:shinderlin}
	There exists a non-trivial homomorphism $\varphi\colon\Bir(\mathbb{P}_{\kk}^n)\to\Z$ in the following cases:
		\begin{enumerate}
		\item $n\geq 5$ and $\kk$ infinite,
		\item $n\geq 4$ and $\car(\kk)=0$,
		\item $n=3$ and $\kk$ a number field, a function field over a number field, or a function field over an algebraically closed field.
	\end{enumerate} 
	 Moreover, in these cases, $\Bir(\mathbb{P}_{\kk}^n)$ is not generated by pseudo-regularisable transformations. 
\end{corollary}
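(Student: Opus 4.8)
The plan is to assemble the corollary directly from the two inputs already in place: Theorem~\ref{thm:quotient}, which produces the homomorphism $\varphi\colon\Bir(\p^n_\kk)\to\Z[\Div(\p^n_\kk)/_\approx]$ together with the nonvanishing criterion and the pseudo-regularisable kernel containment, and Theorem~\ref{thm:linshinder}, which supplies, in each of the three listed ranges, a birational transformation $f\in\Bir(\p^n_\kk)$ with $c(f)\neq 0$. The whole argument is a reduction: I do not need to reprove that $c$ is a homomorphism, only to translate $c(f)\neq 0$ into the hypothesis of the second part of Theorem~\ref{thm:quotient} and then compose $\varphi$ with a suitable surjection onto $\Z$.

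First I would fix one of the three cases and let $f$ be as in Theorem~\ref{thm:linshinder}, so that $c(f)=[K_1]+\dots+[K_m]-[H_1]-\dots-[H_k]\neq 0$ in $\Z[\Bir_{n-1}/\kk]$. The key observation is that $c(f)\neq 0$ forces the absence of a bijection $\Phi\colon\{H_1,\dots,H_k\}\to\{K_1,\dots,K_m\}$ with each $H_i$ Cremona equivalent to $\Phi(H_i)$: if such a $\Phi$ existed, then in particular $k=m$ and each $H_i$ would be birationally equivalent to $\Phi(H_i)$ (Cremona equivalence of subvarieties of $\p^n_\kk$ implies birational equivalence of the subvarieties themselves), so the classes would cancel in pairs in $\Z[\Bir_{n-1}/\kk]$ and give $c(f)=0$, a contradiction. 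Hence $f$ satisfies the hypothesis of the second part of Theorem~\ref{thm:quotient}, and therefore $f\notin\ker(\varphi)$, i.e.\ $\varphi(f)\neq 0$ in $\Z[\Div(\p^n_\kk)/_\approx]$.

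Next I would pass from the free-abelian-group-valued $\varphi$ to a $\Z$-valued homomorphism. Since $\Z[\Div(\p^n_\kk)/_\approx]$ is a free abelian group and $\varphi(f)\neq 0$, there is a basis element $[D]$ (a Cremona equivalence class) appearing with nonzero coefficient in $\varphi(f)$; let $\pi\colon\Z[\Div(\p^n_\kk)/_\approx]\to\Z$ be the coordinate projection onto that basis vector. Then $\psi:=\pi\circ\varphi\colon\Bir(\p^n_\kk)\to\Z$ is a homomorphism with $\psi(f)\neq 0$, hence nontrivial; after possibly post-composing with multiplication by $\pm1$ and noting that a nontrivial homomorphism to $\Z$ has image a nonzero subgroup $d\Z\cong\Z$, we obtain a surjective (hence nontrivial) homomorphism to $\Z$, as required. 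This handles each of the three cases uniformly.

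Finally, the last sentence of the corollary is immediate from the last sentence of Theorem~\ref{thm:quotient}: that theorem already asserts that $\ker(\varphi)$ contains all pseudo-regularisable transformations, so $\ker(\psi)\supseteq\ker(\varphi)$ also contains them all; since $\psi$ is nontrivial, $\Bir(\p^n_\kk)$ cannot be generated by elements of $\ker(\psi)$, and in particular it is not generated by pseudo-regularisable transformations. The only point requiring genuine care — the step I expect to be the main obstacle — is the implication ``$c(f)\neq 0 \Rightarrow$ no Cremona-equivalence-matching bijection exists'': one must make sure the equivalence relation used in $\Z[\Div(X)/_\approx]$ (Cremona equivalence) is \emph{coarser} than, or at least compatible with, the birational-equivalence relation defining $\Z[\Bir_{n-1}/\kk]$, so that cancellation downstairs (in $c(f)$) is implied by a matching upstairs (under $\varphi$); this is exactly the remark already noted in the excerpt that $c(f)\neq 0$ implies the exceptional-locus condition of Theorem~\ref{thm:quotient}, and I would simply make that implication explicit.
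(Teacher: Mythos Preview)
Your proposal is correct and follows exactly the route the paper intends: the paper does not give an explicit proof of Corollary~\ref{cor:shinderlin} but states it as a ``direct corollary'' of Theorem~\ref{thm:quotient} together with Theorem~\ref{thm:linshinder}, noting beforehand that $c(f)\neq 0$ implies the exceptional-locus condition of Theorem~\ref{thm:quotient} and afterwards that $c$ factors as $\alpha\circ\varphi$. Your write-up simply spells out these two observations and the projection $\Z[\Div(\p^n_\kk)/_\approx]\to\Z$, which is precisely what is implicit in the paper.
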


\noindent
Let us observe that being Cremona equivalent is a weaker notion than being birationally equivalent  for divisors \cite{mella2012equivalent}. The homomorphism $c$ factors as
\[
\Bir(X)\xrightarrow{\varphi}\Z[\Div(X)/_\approx]\xrightarrow{\alpha} \Z[\Bir_{n-1}/\kk],
\]
\noindent
where $\alpha$ associates to a hypersurface in $X$ its birational equivalence class.
 A priori, Theorem~\ref{thm:quotient} could be used to construct non-trivial homomorphisms from $\Bir(X)\to \Z[\Div(X)/_\approx]$ for some varieties $X$, even if the homomorphism $c$ on $\Bir(X)$ is trivial. For this, it would be interesting to investigate whether the restriction of $\alpha$ to the image of $\varphi$ is injective or not.
 \medskip
 
\paragraph{\bf Acknowledgments.} The authors thank J\'er\'emy Blanc, Yves Cornulier, Evgeny Shinder and the referee for their comments that helped improve the exposition of the paper. The second author was partially supported by the Basque Government grant IT1483-22, by MCIN /AEI /10.13039/501100011033 / FEDER through the Spanish Governement grant PID2022-138719NA-I00, by H2020-MSCA-COFUND-2020-101034228-WOLFRAM2, and by the French National grant through the project GOFR ANR-22-CE40-0004. The third author was partially supported by the SNSF grant 200020\_192217 "Geometrically ruled surfaces".

\section{Median graphs}

\noindent

\noindent
Recall that a connected graph $\mathcal{G}$ is \emph{median} if, for all vertices $x_1,x_2,x_3 \in \mathcal{G}$, there exists a unique vertex $m \in \mathcal{G}$ satisfying
$$d(x_i,x_j)= d(x_i,m)+d(m,x_j) \text{ for all } i \neq j,$$
where $d(x,y)$ denotes the length of the shortest path between two vertices $x$ and $y$. A path from $x$ to $y$ of length exactly $d(x,y)$ is called a \emph{geodesic}.
The vertex $m$ is the \emph{median point} of $x_1,x_2,x_3$. 

\medskip \noindent
Median graphs appeared in geometric group theory through \emph{CAT(0) cube complexes}, which are, roughly speaking, cellular complexes obtained by gluing cubes together and endowed with a nonpositively curved metric. Introduced in \cite{Gromov} as a convenient source of CAT(0) spaces, the combinatorial structure of CAT(0) cube complexes was highlighted in \cite{MR1347406} thanks to \emph{hyperplanes}, which we define below. A few years after, it was realised that CAT(0) cube complexes essentially defined the same objects as median graphs, introduced in metric graph theory during the 1960s \cite{MR2405677, MR2798499}. More precisely, a graph is median if and only if it is the one-skeleton of a CAT(0) cube complex \cite{mediangraphs, MR1663779, Roller}. In practice, this is the point of view that is the most used nowadays. In the last few decades, these spaces became a classical tool in geometric group theory. Indeed, on the one hand, many groups of interest turn out to admit rich actions on median graphs (or equivalently, CAT(0) cube complexes); and, on the other hand, median geometry provides powerful tools that help us extracting useful information from such actions. As an illustration of this philosophy, a famous recent achievement is given by the proof of the virtual Haken conjecture in low-dimensional topology \cite{MR3104553}.

\medskip \noindent
As already mentioned, a fundamental tool in the study of median graphs is the notion of \emph{hyperplanes}. A \emph{hyperplane} in a median graph $\mathcal{G}$ is an equivalence class of edges with respect to the reflexive-transitive closure of the relation that identifies two opposite edges in a $4$-cycle, i.e. one can think of hyperplanes as parallelism classes of edges.  A path in $\mathcal{G}$ is said to \emph{cross} a hyperplane, if it contains an edge from the equivalence class. Two vertices are \emph{separated} by a hyperplane if every path connecting them crosses it. 
Two  distinct hyperplanes are \emph{transverse} if they contain two adjacent edges in some $4$-cycle (see Figure \ref{fig:transverse}). Two distinct hyperplanes are either transverse or disjoint. \noindent

\medskip\noindent
Let us recall the following facts (see \cite{MR1347406}):
\begin{itemize}
		\item The graph obtained from $\mathcal{G}$ by removing the (interiors of the) edges in a hyperplane $J$ has exactly two connected components, referred to as \emph{halfspaces}, which are convex, i.e.\ all geodesics between two vertices in a given halfspace are contained in the same halfspace.
	\item The \emph{carrier} of a hyperplane $J$, i.e.\ the smallest induced (or full) subgraph containing $J$, is convex.
	\item A path in $\mathcal{G}$ is geodesic if and only if it crosses each hyperplane at most once.
	\item The distance between two vertices coincides with the number of hyperplanes separating them. 
\end{itemize}

	\begin{figure}
	\includegraphics[width=0.5\linewidth]{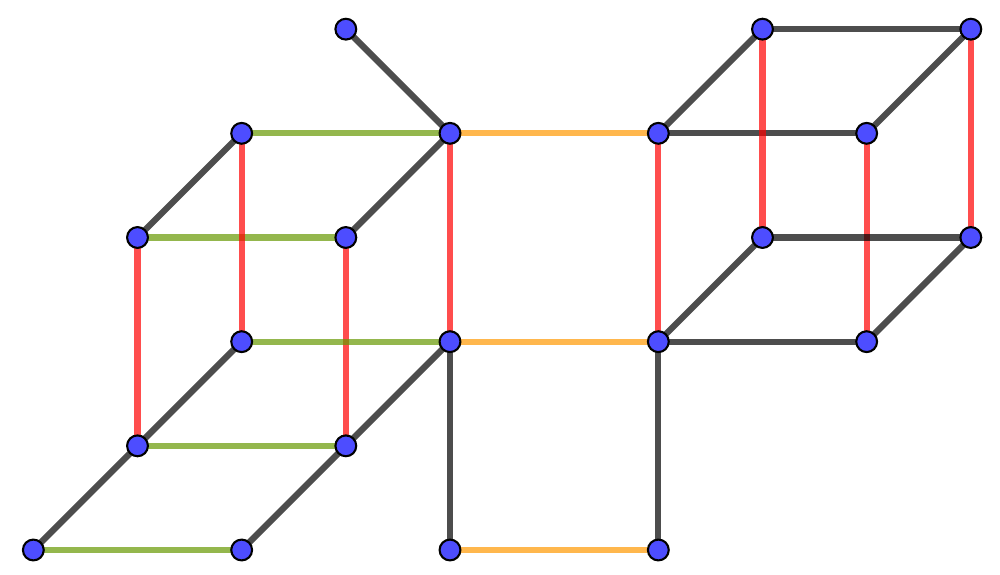}
	\caption{In this example, the hyperplane defined by the red edges is transverse to the hyperplanes defined by the green and yellow edges.\label{fig:transverse}}
\end{figure}

\noindent
Let $\gamma:=(x_0, \ldots, x_n)$ be a path. We say that $\gamma':=(x_0,\ldots x_{i-1},x_i',x_{i+1},\ldots, x_n)$ is obtained from $\gamma$ by \emph{flipping a $4$-cycle}, if the vertices $x_{i-1},x_i,x_{i+1},x_i'$ form an induced $4$-cycle.

\begin{lemma}\label{lem:PathMedian}
	Let $\mathcal{G}$ be a median graph and let $\alpha,\beta$ be two paths with the same endpoints. Then $\alpha$ can be transformed into $\beta$ by adding or removing backtracks and by flipping $4$-cycles.
\end{lemma}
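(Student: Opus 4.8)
The plan is to reduce the statement to two special cases and then combine them, using that "transformable into one another by the allowed moves" is an equivalence relation (adding and removing a backtrack are mutually inverse, and flipping a $4$-cycle is an involution). The two cases are: \emph{(a)} every path can be transformed, by the allowed moves, into a geodesic with the same endpoints; and \emph{(b)} any two geodesics with the same endpoints can be transformed into one another using only flips of $4$-cycles. Granting these, one transforms $\alpha$ into a geodesic $\gamma_\alpha$, transforms $\beta$ into a geodesic $\gamma_\beta$, connects $\gamma_\alpha$ to $\gamma_\beta$ by (b), and reads the composition backwards where needed.

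\emph{Proof of (a).} Induct on the length of the path; a geodesic needs no modification. If $\gamma = (x_0, \dots, x_\ell)$ is not geodesic, it crosses some hyperplane twice, so among all pairs of edges of $\gamma$ dual to a common hyperplane we may pick one — the $i$-th and the $j$-th edge, both dual to a hyperplane $J$, with $j - i > 0$ minimal. Minimality forces the intermediate subpath $\sigma := (x_i, \dots, x_{j-1})$ to cross every hyperplane at most once, hence to be geodesic, and to avoid $J$ entirely; in particular $x_i$ and $x_{j-1}$ lie in a common halfspace $H$ delimited by $J$, and both lie on the carrier $N(J)$. By convexity of $H$ and of $N(J)$, the geodesic $\sigma$ is contained in $N(J)$ and in $H$. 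Now use the standard product structure of the carrier: each vertex $v$ of $N(J) \cap H$ is joined by a $J$-edge to a unique vertex $v'$ in the opposite halfspace, and if $v, u$ are adjacent then so are $v', u'$, with $v, u, u', v'$ a $4$-cycle. Writing $x_{i-1} = x_i'$ and $x_j = x_{j-1}'$, one successively flips the $4$-cycles $(x_k', x_k, x_{k+1}, x_{k+1}')$ for $i \le k \le j-2$ to turn the subpath $(x_{i-1}, x_i, \dots, x_{j-1}, x_j)$ into $(x_i', x_{i+1}', \dots, x_{j-1}', x_{j-1}, x_{j-1}')$, which ends in a backtrack; removing it yields a path with the same endpoints as $\gamma$ but of length $\ell - 2$, and the induction hypothesis applies.

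\emph{Proof of (b).} Induct on the common length $n$ of two geodesics $\gamma = (x_0, \dots, x_n)$ and $\delta = (y_0, \dots, y_n)$ with $x_0 = y_0$ and $x_n = y_n$; the cases $n \le 1$ are trivial. If $x_1 = y_1$, strip this common first edge and apply the induction hypothesis to the two geodesics from $x_1$ to $x_n$. If $x_1 \ne y_1$, then (median graphs being triangle-free) $d(x_1, y_1) = 2$, and the median $w := m(x_1, y_1, x_n)$ is adjacent to both $x_1$ and $y_1$, lies at distance $n - 2$ from $x_n$, and differs from $x_0$; hence $x_0, x_1, w, y_1$ is an induced $4$-cycle. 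Fixing a geodesic $\eta$ from $w$ to $x_n$, the concatenations $\gamma_1 := (x_0, x_1, w) * \eta$ and $\gamma_2 := (x_0, y_1, w) * \eta$ are geodesics of length $n$ that differ by the single flip of the $4$-cycle $x_0, x_1, w, y_1$. Since $\gamma$ and $\gamma_1$ share their first edge, the induction hypothesis (applied after stripping it) relates them by $4$-cycle flips; likewise $\delta$ and $\gamma_2$. Chaining $\gamma \rightsquigarrow \gamma_1 \rightsquigarrow \gamma_2 \rightsquigarrow \delta$ concludes.

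The step I expect to be most delicate is (a): it requires simultaneously invoking convexity of halfspaces and of carriers together with the $J \times \{0,1\}$ product structure of a carrier, and then arranging the $4$-cycle flips so that they can be performed one after another. Isolating the minimal "bigon" $(x_{i-1}, \dots, x_j)$ across a single hyperplane is precisely what keeps this bookkeeping under control; step (b) is then a comparatively clean induction once the median $w$ has been produced.
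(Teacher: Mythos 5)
Your proposal is correct and follows essentially the same two-step strategy as the paper's proof: first reduce to a geodesic by mirroring a minimal subpath across a doubly-crossed hyperplane (using convexity of halfspaces and carriers) to flip and then delete a backtrack, and then compare two geodesics by induction using the median of the two first neighbours and the far endpoint to produce the $4$-cycle to flip. No substantive differences.
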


\noindent
This property is a consequence of the fact that filling the $4$-cycles of a median graph with squares yields a simply connected square complex. For the reader's convenience, we include below a short proof based on median geometry.

\begin{proof}[Proof of Lemma~\ref{lem:PathMedian}.]
	First, we claim that $\alpha$ can be made geodesic by removing backtracks and flipping $4$-cycles. Indeed, if $\alpha$ is not geodesic then it crosses some hyperplane $J$ twice. Let $\alpha_0$ denote a subsegment of $\alpha$ lying between two edges of $J$. Without loss of generality, we assume that $\alpha_0$ does not cross a hyperplane twice. Consequently, $\alpha_0$ is a geodesic. By convexity of halfspaces and carriers, $\alpha_0$ has a mirror path $\alpha_1$ on the other side of $J$.
	\begin{center}
		\includegraphics[width=0.6\linewidth]{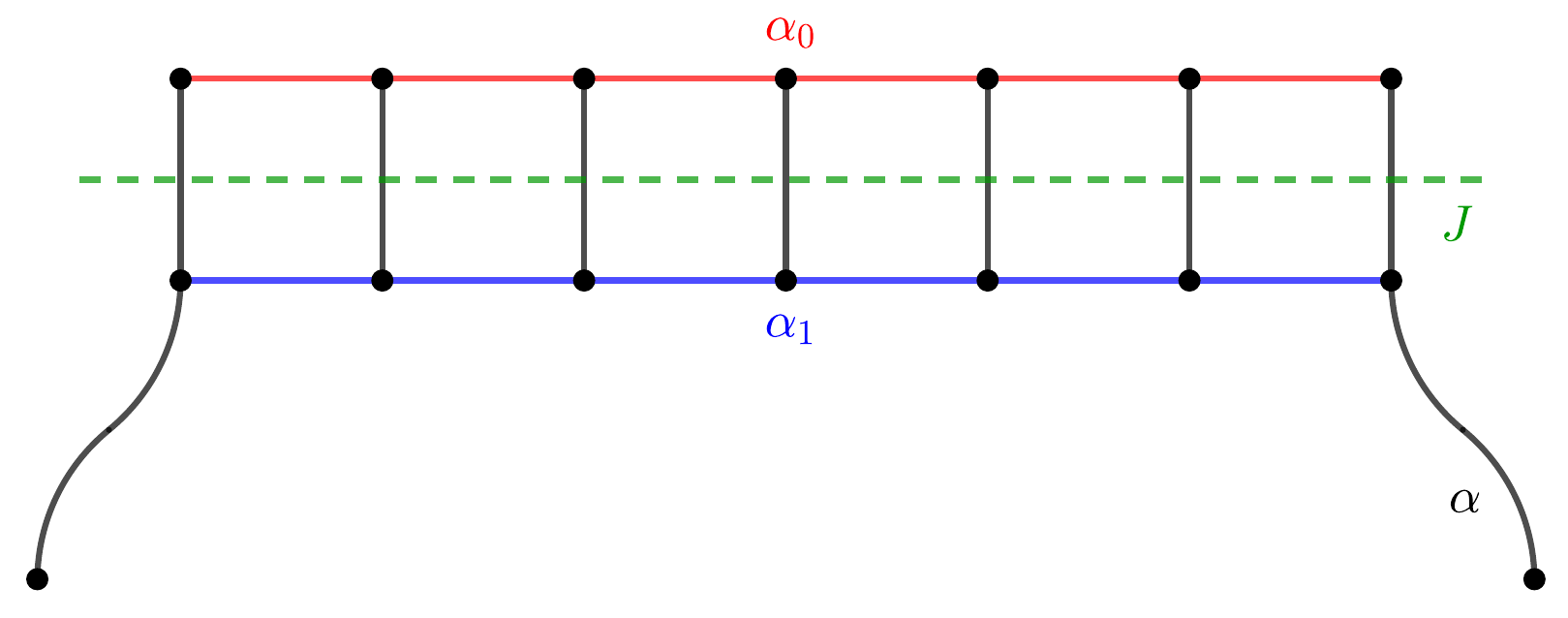}
	\end{center}
	
	\noindent
	Clearly, we can replace $\alpha_0$ in $\alpha$ with $\alpha_1$ by flipping $4$-cycles and removing one backtrack. The process decreases the length of $\alpha$. After finitely many iterations, we obtain a geodesic. This proves our first claim.
	
	\medskip \noindent
	Now, assuming that $\alpha$ is a geodesic, we fix an arbitrary geodesic $\zeta$ connecting the endpoints of $\alpha$, say $x$ and $y$, and we claim that $\alpha$ can be transformed into $\zeta$ by flipping $4$-cycles. We argue by induction on the length of $\alpha$. Let $a \in \alpha$ and $b \in \zeta$ denote the respective neighbours of $x$. If $a=b$, then we obtain the result by the induction hypothesis. Otherwise, notice that the median point $m$ of $y,a,$ and $b$ yields the fourth vertex of a $4$-cycle spanned by $x,a,b$. 
	\begin{center}
		\includegraphics[width=0.5\linewidth]{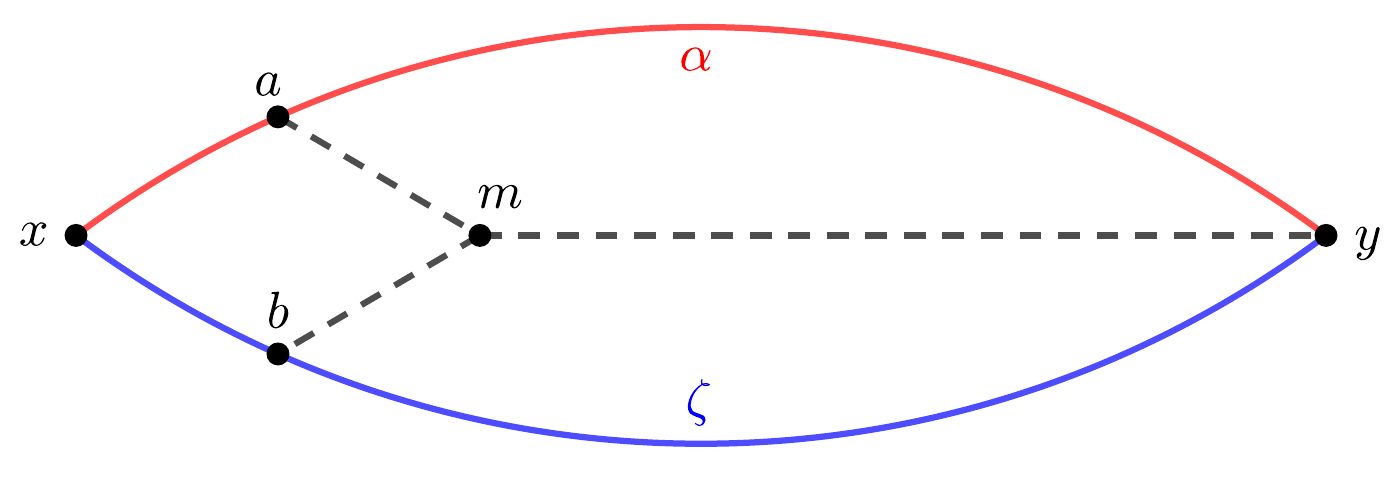}
	\end{center}
	
	\noindent
	As a consequence of our induction hypothesis, $\alpha$ can be transformed as a geodesic of the form $[x,a] \cup [a,m] \cup [m,y]$. By flipping a $4$-cycle, we get the path $[x,b] \cup [b,m] \cup [m,y]$. By applying our induction hypothesis one more time, we can transform this path into~$\zeta$. 
\end{proof}

\section{Homomorphisms to right-angled Artin groups}\label{sec:hom}
\noindent
Given a graph $\Gamma$, the \emph{right-angled Artin group} $A(\Gamma)$ is defined by the presentation
$$\langle u \in V(\Gamma) \mid  uvu^{-1}v^{-1}=1 \ (\{u,v\} \in E(\Gamma)) \rangle$$
where $V(\Gamma)$ and $E(\Gamma)$ denote the vertex- and edge-sets of $\Gamma$. Right-angled Artin groups interpolate between free groups (when $\Gamma$ has no edges) and free abelian groups (when $\Gamma$ is complete). Given an arbitrary group $G$ acting on a median graph $\mathcal{G}$ with no \emph{hyperplane inversion} (i.e. with no isometry stabilising a hyperplane and switching its two halfspaces), our goal now is to construct a natural morphism from $G$ to some right-angled Artin group.  For free actions, this is done in \cite{MR2377497}. Here, we propose a construction for arbitrary actions, inspired by \cite{MR4586831}. \cite{MR4586831} deals more generally with actions on quasi-median graphs and morphisms to graph products of groups. When restricted to actions on median graphs, it yields morphisms to right-angled Coxeter groups. Below, we explain how to adapt the construction in order to get morphisms to right-angled Artin groups instead.

\medskip \noindent
We fix once for all an orientation of the hyperplanes of $\mathcal{G}$ (i.e. we orient the edges of $\mathcal{G}$ so that two opposite sides in a $4$-cycle have parallel orientations).
We denote by $\mathcal{H}$ the set of hyperplanes of $\mathcal{G}$.
Let $\Gamma$ denote the graph whose set of vertices is $\mathcal{H}/G$, i.e. the set of the $G$-orbits of hyperplanes in $\mathcal{G}$ and whose edges connect two orbits whenever they contain transverse hyperplanes. Notice that an oriented path $\alpha$ in $\mathcal{G}$ is naturally labelled by the word written over $V(\Gamma) \sqcup V(\Gamma)^{-1}$ given by the oriented hyperplanes successively crossed by $\alpha$, as $G$ does not have hyperplane inversion.

\begin{theorem}
Let $G$ be a group acting on a median graph $\mathcal{G}$ without hyperplane inversion. Let $o$ be a vertex of $\mathcal{G}$. The following map
$$\Theta : \left\{ \begin{array}{ccc} G & \to & A(\Gamma) \\ g & \mapsto & \text{label of a path from $o$ to $g \cdot o$} \end{array} \right.$$
defines a group homomorphism.
\end{theorem}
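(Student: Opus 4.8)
The plan is to prove well-definedness of $\Theta$ first, then the homomorphism property; throughout, write $\ell(\gamma)\in A(\Gamma)$ for the image in $A(\Gamma)$ of the labelling word of an oriented path $\gamma$. \emph{Well-definedness.} Fix $g\in G$ and two paths $\alpha,\beta$ from $o$ to $g\cdot o$. By Lemma~\ref{lem:PathMedian}, $\beta$ is obtained from $\alpha$ by finitely many insertions and deletions of backtracks and flips of $4$-cycles, so it suffices to check that each of these moves preserves $\ell(\cdot)$. A backtrack crosses one edge twice in opposite directions, hence changes the labelling word by a factor $uu^{-1}$ with $u\in V(\Gamma)\sqcup V(\Gamma)^{-1}$, which is already trivial in the free group on $V(\Gamma)$. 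For a flip, the subpath $(x_{i-1},x_i,x_{i+1})$ crosses the hyperplane $J_1$ of the edge $x_{i-1}x_i$ and then the hyperplane $J_2$ of the edge $x_ix_{i+1}$, while the new subpath $(x_{i-1},x_i',x_{i+1})$ crosses, along the opposite sides of the square, first $J_2$ and then $J_1$; since two opposite sides in a $4$-cycle carry parallel orientations, each $J_i$ is crossed with the same letter $u_i\in V(\Gamma)\sqcup V(\Gamma)^{-1}$ in both subpaths, so the labelling word changes only by replacing a subword $u_1u_2$ with $u_2u_1$. As $J_1$ and $J_2$ lie in a common $4$-cycle they are transverse, hence either $[J_1]=[J_2]$ or $\{[J_1],[J_2]\}\in E(\Gamma)$; in either case $u_1u_2=u_2u_1$ in $A(\Gamma)$. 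Thus $\ell(\alpha)=\ell(\beta)$, so $\Theta$ is well-defined, and applying this to the trivial path gives $\Theta(1)=1$.

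Next I would record that labels are $G$-equivariant: for every $g\in G$ and every path $\gamma$ one has $\ell(g\cdot\gamma)=\ell(\gamma)$. Indeed $g$ sends an oriented edge crossing a hyperplane $J$ to an oriented edge crossing $g\cdot J$, and the $G$-orbit of the corresponding oriented hyperplane is unchanged; since $G$ acts without hyperplane inversion, the fixed orientation of a hyperplane is exactly the datum recorded by the $G$-orbit of the \emph{oriented} hyperplane crossed, so both the vertex $[J]\in V(\Gamma)$ and its sign are preserved by $g$. Now, given $g,h\in G$, pick a path $\alpha$ from $o$ to $g\cdot o$ and a path $\beta$ from $o$ to $h\cdot o$. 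Then $\alpha$ concatenated with $g\cdot\beta$ is a path from $o$ to $gh\cdot o$, and as the labelling word of a concatenation is the concatenation of the labelling words,
\[
\Theta(gh)=\ell(\alpha)\,\ell(g\cdot\beta)=\ell(\alpha)\,\ell(\beta)=\Theta(g)\Theta(h),
\]
using the previous two observations. This is the whole argument.

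I expect the only delicate point to be the bookkeeping with orientations. In the flip step one must check carefully that the two crossings of $J_i$ along opposite sides of the square genuinely receive the same letter of $V(\Gamma)\sqcup V(\Gamma)^{-1}$ — this is precisely where the convention that opposite sides of a $4$-cycle be oriented in parallel is used. In the equivariance step one must be sure that translating by $g$ preserves both the $\Gamma$-vertex and its sign; this is where the absence of hyperplane inversions is essential, since without it the fixed orientation need not be compatible with the $G$-action and $\Theta$ would fail to be a homomorphism (already for $\mathbb{Z}$ acting by translation on a line one can break the homomorphism property with a badly chosen non-equivariant orientation).
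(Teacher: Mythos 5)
Your proposal is correct and follows essentially the same route as the paper: well-definedness via Lemma~\ref{lem:PathMedian} by checking that backtracks contribute trivial subwords and that flips of $4$-cycles replace $u_1u_2$ by $u_2u_1$ for commuting generators, and the homomorphism property via the concatenation $\alpha\cup g\cdot\beta$ together with $G$-invariance of labels. Your extra care about the degenerate case $[J_1]=[J_2]$ in a flip and about the $G$-equivariance of the chosen orientation (where the no-inversion hypothesis enters) only makes explicit what the paper leaves implicit.
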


\begin{proof}
The fact that $\Theta$ is well-defined, i.e.\ the fact that the element of $A(\Gamma)$ represented by the label of a path only depends on its endpoints follows from Lemma~\ref{lem:PathMedian}. Indeed, adding or removing a backtrack to a path amounts to adding or removing a subword $uu^{-1}$ or $u^{-1}u$ (where $u \in V(\Gamma)$) to its label. And flipping a $4$-cycle amounts to replacing a subword $uv$ (resp. $u^{-1}v$, $uv^{-1}$, $u^{-1}v^{-1}$) with $vu$ (resp. $vu^{-1}$, $v^{-1}u$, $v^{-1}u^{-1}$) where $\{u,v\} \in E(\Gamma)$. Thus, $\Theta$ is well-defined.

\medskip \noindent
In order to see that $\Theta$ is a group homomorphism, let $g,h \in G$ be two elements and fix two oriented paths $[o,go]$ and $[o,ho]$. Then $[o,go] \cup g [o,ho]$ yields an oriented path from $o$ to $gho$, and the label of $g[o,ho]$ coincides with the label of $[o,ho]$ because two edges in the same $G$-orbit have the same label. Hence
$$\begin{array}{lcl} \Theta(gh) & = & \text{label of } [o,go] \cup g [o,ho] = (\text{label of } [o,go]) \cdot (\text{label of } g[o,ho])  \\ \\ & = & (\text{label of } [o,go]) \cdot (\text{label of } [o,ho]) = \Theta(g) \Theta(h). \end{array}$$\qedhere
\end{proof}

\noindent Let us observe that the group homomorphism $\Theta$ does not fundamentally depend on the choice of the basepoint $o$.
\begin{lemma}
Let $\Theta'$ denote the group homomorphism defined as $\Theta$ but with respect to a new basepoint $o'$. $\Theta$ and $\Theta'$ only differ by a conjugation.
\end{lemma}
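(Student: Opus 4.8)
The plan is to compare the two homomorphisms $\Theta$ and $\Theta'$ directly using their defining property: both are computed by reading off labels of oriented paths. Fix an oriented path $[o,o']$ from the old basepoint to the new one, and let $w \in A(\Gamma)$ denote its label. The key observation is that for any $g \in G$, the concatenation
$$[o',o] \cup [o,go] \cup g[o,o']$$
is an oriented path from $o'$ to $g \cdot o'$, where $[o',o]$ denotes the reverse of $[o,o']$ (which has label $w^{-1}$, since reversing an oriented path inverts its label letter by letter) and $g[o,o']$ is the $g$-translate of $[o,o']$ (which has the same label $w$ as $[o,o']$, since $G$-translation preserves hyperplane labels, exactly as in the proof that $\Theta$ is a homomorphism). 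Reading the label of this path and using that $\Theta'$ is well-defined by Lemma~\ref{lem:PathMedian}, we obtain $\Theta'(g) = w^{-1} \Theta(g) w$ for all $g \in G$.

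The steps, in order, are: (1) choose and fix an oriented path $[o,o']$ and name its label $w$; (2) record the two elementary facts used about labels, namely that reversing a path inverts the label and that $G$-translating a path preserves the label; (3) exhibit the above concatenated path from $o'$ to $g\cdot o'$ and compute its label as $w^{-1}\Theta(g)w$; (4) invoke well-definedness of $\Theta'$ to conclude $\Theta' = \operatorname{conj}_{w^{-1}} \circ\, \Theta$, i.e. the two differ by conjugation by $w$ in $A(\Gamma)$.

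I do not expect any real obstacle here; the statement is essentially a bookkeeping exercise once the right path is written down, completely parallel to the homomorphism computation already carried out. The only mild subtlety worth stating explicitly is the sign/orientation convention: one should be careful that reversing an oriented edge flips its orientation and hence sends the letter $u$ to $u^{-1}$, so that the reversed path $[o',o]$ contributes $w^{-1}$ and not $w$; getting this right is what makes the final expression a genuine conjugation. I would phrase the conclusion so as not to commit to whether conjugation is by $w$ or by $w^{-1}$, since that depends on left/right conventions for composing paths and is immaterial to the statement.

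\begin{proof}
Fix an oriented path $[o,o']$ from $o$ to $o'$ and let $w \in A(\Gamma)$ be its label. Recall two elementary facts about labels of oriented paths. First, reversing the orientation of a path inverts its label: if $[o,o']$ has label $w$, then the reverse path $[o',o]$ has label $w^{-1}$, since each oriented edge is traversed in the opposite direction. Second, for any $g \in G$ the translated path $g[o,o']$ has the same label $w$ as $[o,o']$, because two edges in the same $G$-orbit carry the same label (here we use again that $G$ acts without hyperplane inversion).

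Now let $g \in G$ be arbitrary and fix an oriented path $[o,go]$. Then
$$[o',o] \cup [o,go] \cup g[o,o']$$
is an oriented path from $o'$ to $g \cdot o'$. Its label is the product $w^{-1} \cdot (\text{label of }[o,go]) \cdot w$. By Lemma~\ref{lem:PathMedian}, the element of $A(\Gamma)$ represented by the label of a path depends only on its endpoints; applying this both to $\Theta'$ (for the path above, from $o'$ to $g\cdot o'$) and to $\Theta$ (for the path $[o,go]$), we deduce
$$\Theta'(g) = w^{-1} \Theta(g) w.$$
Since this holds for every $g \in G$, the homomorphisms $\Theta$ and $\Theta'$ differ precisely by conjugation by $w$ in $A(\Gamma)$.
\end{proof}
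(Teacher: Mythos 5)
Your proof is correct and follows essentially the same route as the paper: both exhibit the concatenated path $[o',o]\cup[o,go]\cup g[o,o']$ from $o'$ to $go'$ and read off its label, using invariance of labels under the $G$-action and well-definedness from Lemma~\ref{lem:PathMedian}. The only difference is cosmetic (you name the label of $[o,o']$ rather than of $[o',o]$, so your conjugating element is the inverse of the paper's).
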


\begin{proof}
Fix a path $[o',o]$ connecting $o'$ to $o$. We denote by $[o,o']$ the path connecting $o$ to $o'$ obtained by travelling along $[o',o]$ in the reverse direction. For every element $g$ of our group, given an arbitrary path $[o,go]$ connecting $o$ to $go$, we have
$$\begin{array}{lcl} \Theta'(g) & = & \text{label of } [o',o] \cup [o,go] \cup g[o,o'] \\ \\ & = & (\text{label of } [o',o]) \cdot (\text{label of } [o,go]) \cdot (\text{label of } g[o,o']) \\ \\ & = & (\text{label of } [o',o]) \cdot (\text{label of } [o,go]) \cdot (\text{label of } [o,o']) \\ \\ & = & (\text{label of } [o',o]) \cdot \Theta(g) \cdot (\text{label of } [o',o])^{-1}. \end{array}$$
\end{proof}

As a consequence, if our right-angled Artin group $A(\Gamma)$ turns out to be abelian, which will be the case in the next section, then our morphism $\Theta$ does not depend on the choice of the basepoint $o$. As another consequence, we also note that all $g\in G$ fixing a vertex are contained in the kernel of $\Theta$.

\section{Actions of Cremona groups on Median graphs}
\noindent
Let $X$ be a normal variety over a field $\kk$, i.e.\ an integral and separated scheme of finite type. We will construct a median graph $\mathcal{C}^0(X)$ with an isometric action of $\Bir(X)$. We define the vertices of $\mathcal{C}^0(X)$ to be equivalence classes of marked pairs $(A,\varphi)$, where $A$ is a normal variety and $\varphi\colon A\dashrightarrow X$ is a birational map. We denote such a class by $[A,\varphi]$. Two marked pairs $(A,\varphi)$ and $(B,\psi)$ are equivalent if $\varphi^{-1}\psi\colon B\dashrightarrow A$ is an isomorphism in codimension $1$. 
Two vertices $v$ and $w$ are connected by an edge oriented from $v$ to $w$ if 
there exists a variety $A$ with a marking $\varphi\colon A\dashrightarrow X$ and hypersurface $H\subset A$ such that $v$ can be represented by $(A,\varphi)$ and $w$ by $(A\setminus H,\varphi|_{A\setminus H})$. 

\begin{theorem}[{\cite[Theorem~1.2]{lonjou-urech}}]\label{thm:medianC0}
	The graph $\mathcal{C}^0(X)$ is a median graph. 
\end{theorem}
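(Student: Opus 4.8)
The plan is to verify that $\mathcal{C}^0(X)$ satisfies the defining property of a median graph: connectivity, plus the existence and uniqueness of a median point for any triple of vertices. I would organize the argument around the combinatorial structure of the vertices, which are marked pairs $(A,\varphi)$ up to isomorphism in codimension $1$. The key observation is that, after fixing a marking, a vertex is determined by a birational model of $X$ considered only up to codimension-$1$ data, and an edge corresponds to deleting a single hypersurface. Thus a natural candidate for distance is to measure how the prime divisors of the two models differ.

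\medskip\noindent
\textbf{Step 1: A normal form for vertices.} First I would show that every vertex $[A,\varphi]$ has a canonical representative, obtained for instance by taking the ``largest'' model in its equivalence class, or by recording the set of valuations (equivalently, prime divisors on some fixed smooth model, or on the Riemann--Zariski space) that survive on $A$. Concretely, to a vertex one associates a subset $S_{[A,\varphi]}$ of the set $\mathcal{D}$ of all divisorial valuations of the function field $\kk(X)$ — namely those divisors that appear as honest hypersurfaces on the given model. Deleting a hypersurface $H\subset A$ removes exactly one element from this set, so an edge between $v$ and $w$ corresponds to $S_v$ and $S_w$ differing by one element, with $S_w = S_v \setminus \{D\}$ in the oriented direction. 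One must check this is well-defined on equivalence classes, which is precisely what ``isomorphism in codimension $1$'' guarantees.

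\medskip\noindent
\textbf{Step 2: Identify $\mathcal{C}^0(X)$ with a subgraph of a cube / interval structure.} Having done Step 1, the graph $\mathcal{C}^0(X)$ embeds into the graph whose vertices are subsets of $\mathcal{D}$ with edges given by adding or removing a single element — i.e. the $1$-skeleton of the cube $\{0,1\}^{\mathcal{D}}$. It is classical that this cube (and any ``interval-closed'' or median-closed subgraph of it) is median: the median of three subsets $S_1,S_2,S_3$ is the majority-vote subset $m = (S_1\cap S_2)\cup(S_2\cap S_3)\cup(S_1\cap S_3)$. So the whole content is to show (a) $\mathcal{C}^0(X)$ is connected, i.e. any two models can be joined by a sequence of one-hypersurface modifications through intermediate \emph{normal} varieties admitting markings; and (b) $\mathcal{C}^0(X)$ is closed under taking these majority-vote medians, i.e. the majority-vote subset $m$ actually corresponds to a vertex of $\mathcal{C}^0(X)$ — there is a normal variety with a marking realizing exactly the divisor set $m$. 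Point (b) is what makes the uniqueness of the median work, since in the ambient cube medians are unique and $\mathcal{C}^0(X)$ is an induced subgraph.

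\medskip\noindent
\textbf{Main obstacle.} I expect the hard part to be the geometric realizability statements inside Step 2: given a finite set of prime divisors on various models, producing a \emph{single normal} variety on which precisely the desired divisors are present as hypersurfaces (and the others contracted), together with a birational marking to $X$. This is a statement about extracting and contracting divisors — the kind of thing one does with blow-ups, normalization, and valuative criteria, but care is needed because we are not in the smooth or projective category and must stay among normal varieties of finite type. In particular one needs: (i) any divisorial valuation of $\kk(X)$ can be realized by a hypersurface on some normal birational model; (ii) given two such models one can find a common normal model dominating both in codimension $1$, so that passing between them is a finite sequence of single-divisor extractions/contractions (this gives connectedness and lets one compute distances as symmetric differences of the sets $S_v$); and (iii) the majority-vote set is again realizable. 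Once these realizability lemmas are in place, the median-graph axioms follow formally from the cube model, and in fact one recovers the sharper statement that $d(v,w) = |S_v \,\triangle\, S_w|$ with geodesics corresponding to monotone paths in the cube. Since this is cited as \cite[Theorem~1.2]{lonjou-urech}, I would at this point simply appeal to that reference for the detailed verification of (i)--(iii).
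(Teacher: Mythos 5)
This note does not actually prove Theorem~\ref{thm:medianC0}: it is imported verbatim from \cite[Theorem~1.2]{lonjou-urech}, so the only proof to compare against is the external one. Your outline --- identifying a vertex with the set of divisorial valuations of $\kk(X)$ realized as prime divisors on the model, embedding $\mathcal{C}^0(X)$ into the hypercube on $\mathcal{D}$, and taking majority-vote medians --- is a viable route and is essentially the ``wallspace'' incarnation of the construction, whereas \cite{lonjou-urech} works in the equivalent language of CAT(0) cube complexes. Two points in your sketch deserve to be flagged as genuine work rather than bookkeeping. First, the injectivity in your Step~1 is exactly the statement that a birational map between \emph{normal} varieties contracting no divisor in either direction is an isomorphism in codimension $1$; this is where normality is used (codimension-one local rings are DVRs, so a dominant inclusion between them is an equality), and without it the whole identification of vertices with subsets of $\mathcal{D}$ collapses.

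Second, and more importantly, your Step~2 becomes much cleaner if you pin down the image of $v\mapsto S_v$ exactly: it is the set of subsets of $\mathcal{D}$ with \emph{finite} symmetric difference from $S_{[X,\id]}$. Removing an element is free (the complement of a hypersurface in a normal variety of finite type is again one), and adding a single divisorial valuation is done by passing to a model where it has divisorial centre, dominating both models by a normalized graph, and then deleting the finitely many other newly appearing divisors; finiteness of the symmetric difference for any vertex follows because an exceptional locus has only finitely many irreducible components. Once the image is identified as \emph{all} finite modifications of $S_{[X,\id]}$, your items (i)--(iii) --- connectivity, the distance formula $d(v,w)=|S_v\,\triangle\,S_w|$, and realizability of the majority-vote set --- all follow at once, since the restricted hypercube $\{S:|S\,\triangle\,S_0|<\infty\}$ with Hamming-distance-one edges is the standard example of an infinite-dimensional median graph. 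This also repairs a small logical slip: a connected, induced, median-closed subgraph of a median graph need not be median unless the embedding is isometric, but isometry is automatic here because one can interpolate monotonically between any two finite modifications. With these two points made precise, your argument is complete and is a legitimate alternative to the cube-complex verification carried out in the cited reference.
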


\noindent
Let us describe the hyperplanes in $\mathcal{C}^0(X)$. An edge in $\mathcal{C}^0(X)$ is given by removing an irreducible hypersurface $H$ from a marked variety $(A,\varphi)$. We denote the hyperplane defined by this edge by $[(A,\varphi, H)]$. One can check the following:

\begin{lemma}[{\cite[Lemma~4.10]{lonjou-urech}}]\label{lemma_hyp}
	We have that $[(A,\varphi, H)]=[(B,\psi, K)]$ if and only if $K$ is not contained in the exceptional locus of $\varphi^{-1}\psi$ and $H$ is the strict transform $\varphi^{-1}\psi(K)$ of $K$.
\end{lemma}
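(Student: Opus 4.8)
The plan is to unwind the definitions of edges, vertices, and hyperplanes in $\mathcal{C}^0(X)$ and to reduce the equality of two hyperplanes to a statement about a single $4$-cycle, then iterate. First I would recall that a hyperplane is, by definition, the equivalence class of an edge under the reflexive--transitive closure of the relation identifying opposite edges of a $4$-cycle; hence $[(A,\varphi,H)] = [(B,\psi,K)]$ if and only if there is a finite chain of $4$-cycles linking the edge $e_H$ (from $[A,\varphi]$ to $[A\setminus H, \varphi|_{A\setminus H}]$) to the edge $e_K$. The key reduction is therefore to understand when two edges $e_H$ and $e_K$ are opposite sides of a single $4$-cycle in $\mathcal{C}^0(X)$, and to show that the condition ``$K$ is not contained in the exceptional locus of $\varphi^{-1}\psi$ and $H = \varphi^{-1}\psi(K)$'' is stable under passing from one edge of such a $4$-cycle to the opposite one (in fact an equivalence relation on edges), and that it coincides with the generated equivalence.

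Concretely, I would first analyze a single $4$-cycle. Using Theorem~\ref{thm:medianC0} together with the explicit description of edges, a $4$-cycle is obtained by choosing a marked variety $(A,\varphi)$ and two distinct irreducible hypersurfaces $H_1, H_2 \subset A$: the four vertices are $[A,\varphi]$, $[A\setminus H_1, \cdot]$, $[A\setminus H_2,\cdot]$, and $[A\setminus(H_1\cup H_2),\cdot]$, with the edge $e_{H_1}$ appearing twice as opposite sides (removing $H_1$ with or without $H_2$ already gone). Thus, for opposite edges of a $4$-cycle, the two representatives $(A,\varphi,H_1)$ and $(A\setminus H_2, \varphi|_{A\setminus H_2}, H_1)$ define literally the same hypersurface up to an isomorphism in codimension $1$ (namely the identity away from $H_2$), so the stated condition is visibly symmetric and reflexive on the pieces of a $4$-cycle. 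For transitivity along a chain, the point is that ``isomorphism in codimension $1$'' relations compose: if $H = \varphi^{-1}\psi(K)$ (with $K$ not exceptional for $\varphi^{-1}\psi$) and $K = \psi^{-1}\rho(L)$ (with $L$ not exceptional for $\psi^{-1}\rho$), then one checks that $L$ is not contained in the exceptional locus of $\varphi^{-1}\rho$ and $H = \varphi^{-1}\rho(L)$, using that strict transforms compose and that a hypersurface contracted by a composition is detected at one of the two stages. Conversely, given the condition, one must exhibit an actual chain of $4$-cycles; this is where I would invoke the structure theory from \cite{lonjou-urech}: resolve $\varphi^{-1}\psi$ by a common model $Z\to A$, $Z\to B$ that is an isomorphism in codimension $1$ on the relevant locus, and realize the passage from $(A,\varphi,H)$ to $(B,\psi,K)$ as a zig-zag of edges through $[Z,\cdot]$, each elementary step being a flip of a $4$-cycle by Lemma~\ref{lem:PathMedian}.

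The main obstacle I anticipate is the ``only if'' direction made fully rigorous: showing that if $e_H$ and $e_K$ lie in the same hyperplane then indeed $K$ avoids the exceptional locus of $\varphi^{-1}\psi$ and $H=\varphi^{-1}\psi(K)$. The difficulty is bookkeeping the behaviour of strict transforms and exceptional loci along an a priori long chain of $4$-cycles passing through many different marked models; one needs that at no intermediate stage does the relevant hypersurface get contracted, which follows because each elementary move only removes an \emph{unrelated} hypersurface $H_2 \neq H$ from a model and the connecting birational maps are isomorphisms in codimension $1$ precisely away from those. Once this invariance is established, the equivalence drops out: the ``if'' direction produces an explicit chain, and the ``only if'' direction shows the condition is a chain-invariant refining (hence equal to) the hyperplane relation. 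The remaining verifications — that removing a hypersurface not equal to $H$ does not affect the class of $H$ as a Cartier/Weil divisor up to codimension-$1$ isomorphism, and that Lemma~\ref{lemma_hyp}'s condition is symmetric in $(A,\varphi)$ and $(B,\psi)$ (i.e. $H$ not exceptional for $\psi^{-1}\varphi$ and $K = \psi^{-1}\varphi(H)$) — are routine from the definition of equivalence of marked pairs and from basic properties of birational maps between normal varieties.
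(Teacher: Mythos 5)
The paper does not actually prove this lemma: it is quoted verbatim from \cite[Lemma~4.10]{lonjou-urech} and introduced with ``one can check the following,'' so there is no in-paper argument to compare against. Your outline is the natural proof and is, in spirit, what the cited reference does: identify the equivalence class of an edge of $\mathcal{C}^0(X)$ with a single ``hypersurface over $X$'' (a divisorial valuation), and check that the strict-transform condition is exactly equality of these valuations. Two points in your plan carry essentially all of the content and are currently asserted rather than proved. First, the normal form for $4$-cycles: from the bare definition of $\mathcal{C}^0(X)$, a $4$-cycle has four vertices each represented by a priori different marked pairs, and the edge orientations around the cycle could a priori be arranged with two sources and two sinks rather than one top vertex $[A,\varphi]$ and one bottom vertex $[A\setminus(H_1\cup H_2),\cdot]$. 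Ruling this out, and realising all four vertices on a single model, already uses the identification of vertices with (co)finite sets of divisorial valuations that underlies Theorem~\ref{thm:medianC0}; it is not read off the definition of edges. Second, in the ``if'' direction, Lemma~\ref{lem:PathMedian} is not the relevant tool --- it homotopes paths between fixed endpoints, whereas what you need is an explicit chain of $4$-cycles joining the two given edges; this is done by passing to a marked pair dominating both $(A,\varphi)$ and $(B,\psi)$ in codimension $1$ and tracking the common valuation determined by $H$ and $K$, and the verification that the tracked hypersurface is never contracted along the chain is exactly the transitivity computation you sketch. Neither point is a wrong step, but both are the places where a referee would ask you to write out the details rather than call them routine.
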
 

\noindent
We obtain an action of $\Bir(X)$ on $\mathcal{C}^0(X)$ by letting an $f\in\Bir(X)$ map a vertex $[A,\varphi]$ of $\mathcal{C}^0(X)$ to the vertex $[A,f\varphi]$. It is straightforward to check that this is well defined and defines an action of $\Bir(X)$ on $\mathcal{C}^0(X)$ without inversion. Let us also note that an element $f\in\Bir(X)$ fixes a vertex $[Y,\psi]$ in $\mathcal{C}^0(X)$ if and only if $[Y,\psi]=[Y,f\psi]$, which means that $\psi^{-1}f\psi$ is an automorphism in codimension 1 of $Y$, i.e.\ $f$ is pseudo-regularisable.

\begin{lemma}
	The map $ [(X,\id,H)]\mapsto H $ induces a bijection between the set of $\Bir(X)$-orbits of hyperplanes of the form $[(X,\id,H)]$ and the prime divisors of $X$ modulo Cremona equivalence.
\end{lemma}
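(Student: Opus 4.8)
The plan is to show that the map $[(X,\id,H)] \mapsto H$ is well-defined on $\Bir(X)$-orbits, surjective, and injective onto the set of prime divisors modulo Cremona equivalence. The key technical input is Lemma~\ref{lemma_hyp}, which characterises when two hyperplanes $[(A,\varphi,H)]$ and $[(B,\psi,K)]$ coincide, combined with the description of the $\Bir(X)$-action on vertices (hence on edges and hyperplanes): an element $f \in \Bir(X)$ sends the edge determined by removing $H$ from $(A,\varphi)$ to the edge determined by removing $H$ from $(A, f\varphi)$, so $f \cdot [(A,\varphi,H)] = [(A, f\varphi, H)]$.

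\medskip\noindent
\textbf{Well-definedness and the orbit description.} First I would observe that every hyperplane in $\mathcal{C}^0(X)$ has \emph{some} representative of the form $[(X,\id, H)]$: given an arbitrary hyperplane $[(A,\varphi, H)]$, the birational map $\varphi \colon A \dashrightarrow X$ is an isomorphism away from a proper closed subset, so either $H$ is contracted by $\varphi$ — but then the edge it defines is degenerate, contradicting that it defines a hyperplane — or $H$ has a strict transform $H' = \varphi(H)$ in $X$, and by Lemma~\ref{lemma_hyp} we get $[(A,\varphi,H)] = [(X,\id, H')]$. Next, using the action formula above, $f \cdot [(X,\id, H)] = [(X, f, H)]$, and by Lemma~\ref{lemma_hyp} this equals $[(X, \id, K)]$ precisely when $K$ is not contained in the exceptional locus of $f^{-1}$ and $H = f^{-1}(K)$, i.e.\ when $K = f(H)$ is the strict transform of $H$ under $f$. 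Since $f$ is an arbitrary element of $\Bir(X)$, this says exactly that two hyperplanes $[(X,\id,H)]$ and $[(X,\id,K)]$ lie in the same $\Bir(X)$-orbit if and only if $H$ and $K$ are Cremona equivalent. This simultaneously gives that the assignment descends to a well-defined map on orbits and that it is injective.

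\medskip\noindent
\textbf{Surjectivity and conclusion.} Surjectivity is immediate: every prime divisor $H \subset X$ yields the edge from $[X,\id]$ to $[X \setminus H, \id|_{X\setminus H}]$, hence the hyperplane $[(X,\id,H)]$, which maps to the Cremona class of $H$. Combining injectivity and surjectivity gives the claimed bijection.

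\medskip\noindent
\textbf{Main obstacle.} The only genuinely delicate point is checking that a hyperplane $[(A,\varphi,H)]$ always admits a representative with underlying variety $X$ and marking $\id$ — equivalently, that the hypersurface $H$ defining a genuine (non-degenerate) edge cannot be contracted by $\varphi$. This requires knowing that the edge $[A,\varphi] \to [A\setminus H, \varphi|_{A\setminus H}]$ is non-degenerate, i.e.\ that $[A,\varphi] \neq [A \setminus H, \varphi|_{A\setminus H}]$ as vertices, which follows from the definition of the equivalence relation on marked pairs (isomorphism in codimension $1$) once one notes that removing a hypersurface genuinely changes the variety in codimension $1$ unless $H$ is already ``not seen'' by $X$ through $\varphi$; the careful bookkeeping of strict transforms versus exceptional loci, all of which is encapsulated in Lemma~\ref{lemma_hyp}, handles this. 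Beyond that, the proof is a direct unwinding of definitions.
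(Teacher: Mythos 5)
Your core argument is correct and is essentially the paper's own proof: you apply Lemma~\ref{lemma_hyp} together with the formula $f\cdot[(A,\varphi,H)]=[(A,f\varphi,H)]$ to conclude that $[(X,\id,H)]$ and $[(X,\id,K)]$ lie in the same $\Bir(X)$-orbit if and only if $K$ is the strict transform of $H$ under some $f\in\Bir(X)$, i.e.\ $H$ and $K$ are Cremona equivalent; together with the trivial surjectivity this proves the lemma. (The paper runs the same computation slightly more generally, for hyperplanes $[(X,\varphi,H)]$ and $[(X,\psi,K)]$ with arbitrary markings, which is the form actually used later in the proof of Theorem~\ref{thm:quotient}.)

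However, the claim you single out as ``the only genuinely delicate point'' --- that every hyperplane of $\mathcal{C}^0(X)$ admits a representative of the form $[(X,\id,H')]$ --- is false, and your justification for it contains a genuine error. The edge from $[A,\varphi]$ to $[A\setminus H,\varphi|_{A\setminus H}]$ is \emph{never} degenerate: the map $\varphi^{-1}\circ\varphi|_{A\setminus H}$ is the inclusion $A\setminus H\hookrightarrow A$, which is never an isomorphism in codimension $1$ because $H$ is a hypersurface. So the dichotomy ``either $H$ is contracted by $\varphi$ and the edge is degenerate, or $H$ has a strict transform in $X$'' does not hold. Concretely, if $\pi\colon A\to X$ is the blow-up of a point of a smooth surface and $H$ is the exceptional divisor, then $[(A,\pi,H)]$ is a perfectly good hyperplane, yet by Lemma~\ref{lemma_hyp} it equals no $[(X,\id,K)]$, since no divisor $K\subset X$ has strict transform $H$. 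Fortunately this does not damage your proof of the lemma: the statement only concerns hyperplanes that are already of the form $[(X,\id,H)]$, so the representative-finding step is not needed at all, and the rest of your argument (well-definedness on orbits, injectivity, surjectivity) goes through unchanged. You should simply delete that claim rather than rely on it.
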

\begin{proof}
Two hyperplanes $[(X,\varphi, H)]$ and $[(X,\psi, K)]$ are in the same $\Bir(X)$-orbit if and only if, by Lemma \ref{lemma_hyp}, there exists a birational transformation $g\in\Bir(X)$ such that the strict transform $\psi^{-1}g\varphi(H)$ equals $K$, which is equivalent to the existence of a birational transformation $\tilde{g}\in \Bir(X)$ such that  the strict transform $\tilde{g}(H)$ equals $K$, i.e.\ the two irreducible hypersurfaces $H$ and $K$ are {Cremona equivalent}. 
\end{proof}

\subsection{Proof of Theorem~\ref{thm:quotient}}
We are now ready to deduce  Theorem~\ref{thm:quotient} with the help of the homomorphism $\Theta$ constructed above. The  hyperplanes of $\mathcal{C}^0(X)$ already come with an orientation. Moreover, hyperplanes are pairwise transverse (\cite[Proposition~4.7]{lonjou-urech}), so the graph $\Gamma$ constructed in Section~\ref{sec:hom} is complete and the group $A(\Gamma)$ is therefore free abelian.

\medskip\noindent
Fix the vertex $o=[X,\id]$ in $\mathcal{C}^0(X)$. We obtain a path from $o$ to $f(o)$ defined by the vertices  
\[[X,\id], [X\setminus K_1,\id],\dots,[X\setminus \{K_1\cup\dots\cup K_m\},\id]=[X\setminus \{H_1\cup\dots\cup H_k\},f],\dots, [X,f],\]
This shows that the image $\Theta(\Bir(X))$ is in fact contained in $A(\Gamma')$, where $\Gamma'\subset \Gamma$ is the complete subgraph whose vertices are represented by hyperplanes of the form $[(X,\psi, H)]$. Moreover, since two hyperplanes $[(X,\psi, H)]$ and $[(X,\psi', H')]$ represent the same vertex if and only if $H$ and $H'$ are Cremona equivalent by Lemma \ref{lemma_hyp}, we may identify the vertices of $\Gamma'$ with the Cremona equivalence classes of hypersurfaces in $X$ and the group $A(\Gamma')$ with $\Z[\Div(X)/_\approx]$. The path $[o, f(o)]$ crosses the hypersurface classes $[K_j]$ with positive sign and the hypersurface classes $[H_i]$ with negative sign. This proves the first part of the theorem. Clearly, an $f$ satisfying the conditions from the second part of the theorem is not contained in the kernel. Finally, every pseudo-regularisable element fixes a vertex in $\mathcal{C}^0(X)$. Hence, all pseudo-regularisable elements are in the kernel of $\Theta$. \qed

\bibliographystyle{amsalpha}
{\footnotesize\bibliography{bibliography_cu}}
\Address

\end{document}